\definecolor{daRemarkgreen}{rgb}{0,0.45,0}
\newcommand{\cat}[1]{\mathbf{#1}}
\DeclareMathAlphabet      {\mathbf}{OT1}{cmr}{b}{n}
\newcommand{\cd}[2][]{\vcenter{\hbox{\xymatrix#1{#2}}}}
\def\matrixobject@{%
   \edef \next@{={\DirectionfromtheDirection@ }}%
   \expandafter \toks@ \next@ \plainxy@
   \let\xy@@ix@=\xyq@@toksix@
   \xyFN@ \OBJECT@}
\let\xy@entry@@norm=\entry@@norm
\def\entry@@norm@patched{%
   \let\object@=\matrixobject@
   \xy@entry@@norm }
\renewcommand{\phi}{\varphi}
\newcommand{\C}{{\mathcal C}}
\newcommand{\xtor}[1]{\cdl[@1]{{} \ar[r]|-{\object@{|}}^{#1} & {}}}
\newcommand{\setmanuallabel}[1]{\stepcounter{equation}{\edef\@currentlabel{\theequation}\label{#1}}}
\newcommand{\printmanuallabel}[1]{\stepcounter{equation}\text{(\theequation)}}
\def\hookleftarrowfill@{\arrowfill@\leftarrow\relbar{\relbar\joinrel\sigmaok}}
\def\twoheadleftarrowfill@{\arrowfill@\twoheadleftarrow\relbar\relbar}
\def\leftbararrowfill@{\arrowdoublefill@{\leftarrow\mkern-5mu}\relbar\mapstochar\relbar\relbar}
\def\Leftbararrowfill@{\arrowdoublefill@{\Leftarrow\mkern-2mu}\Relbar\Mapstochar\Relbar\Relbar}
\def\leftringarrowfill@{\arrowdoublefill@{\leftarrow\mkern-3mu}\relbar{\mkern-3mu\circ\mkern-2mu}\relbar\relbar}
\def\lefttriarrowfill@{\arrowfill@{\mathrel\triangleleft\mkern0.5mu\joinrel\relbar}\relbar\relbar}
\def\Lefttriarrowfill@{\arrowfill@{\mathrel\triangleleft\mkern1mu\joinrel\Relbar}\Relbar\Relbar}
\def\hookrightarrowfill@{\arrowfill@{\lhook\joinrel\relbar}\relbar\rightarrow}
\def\twoheadrightarrowfill@{\arrowfill@\relbar\relbar\twoheadrightarrow}
\def\rightbararrowfill@{\arrowdoublefill@{\relbar\mkern-0.5mu}\relbar\mapstochar\relbar\rightarrow}
\def\Rightbararrowfill@{\arrowdoublefill@{\Relbar\mkern-2mu}\Relbar\Mapstochar\Relbar\Rightarrow}
\def\rightringarrowfill@{\arrowdoublefill@\relbar\relbar{\mkern-2mu\circ\mkern-3mu}\relbar{\mkern-3mu\rightarrow}}
\def\righttriarrowfill@{\arrowfill@\relbar\relbar{\relbar\joinrel\mkern0.5mu\mathrel\triangleright}}
\def\Righttriarrowfill@{\arrowfill@\Relbar\Relbar{\Relbar\joinrel\mkern1mu\mathrel\triangleright}}
\def\leftrightarrowfill@{\arrowfill@\leftarrow\relbar\rightarrow}
\def\mapstofill@{\arrowfill@{\mapstochar\relbar}\relbar\rightarrow}
\renewcommand*\xleftarrow[2][]{\ext@arrow 20{20}0\leftarrowfill@{#1}{#2}}
\providecommand*\xLeftarrow[2][]{\ext@arrow 60{22}0{\Leftarrowfill@}{#1}{#2}}
\providecommand*\xhookleftarrow[2][]{\ext@arrow 10{20}0\hookleftarrowfill@{#1}{#2}}
\providecommand*\xtwoheadleftarrow[2][]{\ext@arrow 60{20}0\twoheadleftarrowfill@{#1}{#2}}
\providecommand*\xleftbararrow[2][]{\ext@arrow 10{22}0\leftbararrowfill@{#1}{#2}}
\providecommand*\xLeftbararrow[2][]{\ext@arrow 50{24}0\Leftbararrowfill@{#1}{#2}}
\providecommand*\xleftringarrow[2][]{\ext@arrow 10{26}0\leftringarrowfill@{#1}{#2}}
\providecommand*\xlefttriarrow[2][]{\ext@arrow 80{24}0\lefttriarrowfill@{#1}{#2}}
\providecommand*\xLefttriarrow[2][]{\ext@arrow 80{24}0\Lefttriarrowfill@{#1}{#2}}
\renewcommand*\xrightarrow[2][]{\ext@arrow 01{20}0\rightarrowfill@{#1}{#2}}
\providecommand*\xRightarrow[2][]{\ext@arrow 04{22}0{\Rightarrowfill@}{#1}{#2}}
\providecommand*\xhookrightarrow[2][]{\ext@arrow 00{20}0\hookrightarrowfill@{#1}{#2}}
\providecommand*\xtwoheadrightarrow[2][]{\ext@arrow 03{20}0\twoheadrightarrowfill@{#1}{#2}}
\providecommand*\xrightbararrow[2][]{\ext@arrow 01{22}0\rightbararrowfill@{#1}{#2}}
\providecommand*\xRightbararrow[2][]{\ext@arrow 04{24}0\Rightbararrowfill@{#1}{#2}}
\providecommand*\xrightringarrow[2][]{\ext@arrow 01{26}0\rightringarrowfill@{#1}{#2}}
\providecommand*\xrighttriarrow[2][]{\ext@arrow 07{24}0\righttriarrowfill@{#1}{#2}}
\providecommand*\xRighttriarrow[2][]{\ext@arrow 07{24}0\Righttriarrowfill@{#1}{#2}}
\providecommand*\xmapsto[2][]{\ext@arrow 01{20}0\mapstofill@{#1}{#2}}
\providecommand*\xleftrightarrow[2][]{\ext@arrow 10{22}0\leftrightarrowfill@{#1}{#2}}
\providecommand*\xLeftrightarrow[2][]{\ext@arrow 10{27}0{\Leftrightarrowfill@}{#1}{#2}}
\newcommand{\twocong}[2][0.5]{\ar@{}[#2] \save ?(#1)*{\cong}\restore}
\newcommand{\twoeq}[2][0.5]{\ar@{}[#2] \save ?(#1)*{=}\restore}
\newcommand{\rtwocell}[3][0.5]{\ar@{}[#2] \ar@{=>}?(#1)+/l 0.2cm/;?(#1)+/r 0.2cm/^{#3}}
\newcommand{\ltwocell}[3][0.5]{\ar@{}[#2] \ar@{=>}?(#1)+/r 0.2cm/;?(#1)+/l 0.2cm/^{#3}}
\newcommand{\ltwocello}[3][0.5]{\ar@{}[#2] \ar@{=>}?(#1)+/r 0.2cm/;?(#1)+/l 0.2cm/_{#3}}
\newcommand{\dtwocell}[3][0.5]{\ar@{}[#2] \ar@{=>}?(#1)+/u  0.2cm/;?(#1)+/d 0.2cm/^{#3}}
\newcommand{\dltwocell}[3][0.5]{\ar@{}[#2] \ar@{=>}?(#1)+/ur  0.2cm/;?(#1)+/dl 0.2cm/^{#3}}
\newcommand{\drtwocell}[3][0.5]{\ar@{}[#2] \ar@{=>}?(#1)+/ul  0.2cm/;?(#1)+/dr 0.2cm/^{#3}}
\newcommand{\dthreecell}[3][0.5]{\ar@{}[#2] \ar@3{->}?(#1)+/u  0.2cm/;?(#1)+/d 0.2cm/^{#3}}
\newcommand{\utwocell}[3][0.5]{\ar@{}[#2] \ar@{=>}?(#1)+/d 0.2cm/;?(#1)+/u 0.2cm/_{#3}}
\newcommand{\dtwocelltarg}[3][0.5]{\ar@{}#2 \ar@{=>}?(#1)+/u  0.2cm/;?(#1)+/d 0.2cm/^{#3}}
\newcommand{\utwocelltarg}[3][0.5]{\ar@{}#2 \ar@{=>}?(#1)+/d  0.2cm/;?(#1)+/u 0.2cm/_{#3}}
\theoremstyle{definition}
\numberwithin{equation}{section}
\theoremstyle{plain}
\newtheorem{Theorem}{Theorem}[section]
\newtheorem{Lemma}[Theorem]{Lemma}
\theoremstyle{definition}
\newtheorem{Definition}[Theorem]{Definition}
\newtheorem{Remark}[Theorem]{Remark}
\newcommand{\f}[1]{\mathbb #1}
\newcommand{\Set}{{\cat{Set}}}
\begin{document}
\leftmargini=2em \title[From types, topological spaces etc. to globular weak $\omega$-groupoids]{Note on the construction of globular weak $\omega$-groupoids from types, topological spaces ETC.}
\author{John Bourke}
\address{Department of Mathematics, Macquarie University, NSW 2109, Australia}
\email{john.d.bourke@mq.edu.au}
\address{Department of Mathematics, Masaryk University,  Brno 6000, Czech Republic}
\email{bourkej@math.muni.cz}
\subjclass[2000]{Primary: 18D05}
\date{\today}

\maketitle
\begin{abstract}
A short introduction to Grothendieck weak $\omega$-groupoids is given.  Our aim is to give evidence that, in certain contexts, this simple language is a convenient one for constructing globular weak $\omega$-groupoids.  To this end, we give a short reworking of van den Berg and Garner's construction of a Batanin weak $\omega$-groupoid from a type using the language of Grothendieck weak $\omega$-groupoids.
\end{abstract}

\section{Introduction}
Around 2009/2010 van den Berg and Garner \cite{Berg2011Types} and Lumsdaine \cite{Lumsdaine2010Weak} independently showed that a type in intensional type theory gives rise to a weak $\omega$-category in the sense of Batanin \cite{Batanin1998Monoidal}.\begin{footnote}{To be precise, both papers employed the mild reformulation of Batanin's definition given by Leinster in \cite{Leinster2002A-survey}.}\end{footnote} In \cite{Berg2011Types} this weak $\omega$-category was shown, moreover, to be a weak $\omega$-groupoid.\\
Shortly after these papers appeared, Georges Maltsiniotis \cite{Maltsiniotis2010Grothendieck} brought to attention, and simplified, a further globular definition of weak $\omega$-groupoid that first appeared at the beginning of Grothendieck's manuscript \emph{Pursuing Stacks} \cite{Grothendieck1983Pursuing}.\par % Further developments in this direction can be found in the work of Dimitri Ara -- for instance in \cite{Ara2013On-the}.\\
At the end of 2015 I read the papers of van den Berg--Garner and Maltsiniotis  around the same time.  Being struck by the low-tech and transparent nature of the Grothendieck definition, I figured that it should be significantly easier to communicate the main results of \cite{Berg2011Types} by substituting Batanin's weak $\omega$-groupoids for Grothendieck's.\par
The goal of this largely expository note is to explain precisely that.  We give a self contained introduction to Grothendieck weak $\omega$-groupoids and in that language give a direct reworking, attempting nothing original, of the central results and proofs of \cite{Berg2011Types}.  For the reader unfamiliar with type theory let us point out that the main construction applies to topological spaces and Kan complexes as well as to types.  Our thesis is that Grothendieck weak $\omega$-groupoids provide a transparent and workable notion of globular weak $\omega$-groupoid, and our economical reworking of the main result of \emph{loc.cit.} is intended as evidence to that effect.\par
On setting down to write the present note I became aware that a closely related connection between Grothendieck weak $\omega$-groupoids and intensional type theory was already made by Brunerie \cite{Brunerie2013Syntactic} in 2013.  He defined an intensional type theory whose models provide a notion of weak $\omega$-groupoid, and has shown that each type naturally gives rise to a weak $\omega$-groupoid of that kind.  It is expected that these type theoretic weak $\omega$-groupoids, after some minor modifications \begin{footnote}{One needed modification concerns the shapes of operations that are allowed: the contractible contexts of \cite{Brunerie2013Syntactic} encode globular sets such as the free span that are not encoded by the tables of dimensions described here.}\end{footnote}, are essentially the same as the Grothendieck weak $\omega$-groupoids described here, although the precise details of this correspondence are not yet written down.\par
Let us now give a brief summary of what follows.  In Section 2 we recall the notion of Grothendieck weak $\omega$-groupoid.  This material is from \cite{Maltsiniotis2010Grothendieck} up to insignificant notational distinctions.  Section 3 closely follows \cite{Berg2011Types} in introducing identity type categories and iterating the path object construction to build globular objects in such categories.  We additionally point out that topological spaces and Kan complexes form identity type categories.  Section 4 introduces endomorphism globular theories whilst Section 5 reinterprets the main result and proof of \cite{Berg2011Types} using Grothendieck weak $\omega$-groupoids.\par
The author thanks Clemens Berger, Guillaume Brunerie, Richard Garner and Mark Weber for useful discussions on this topic.

\section{Globular theories and $\omega$-groupoids}
\subsection{The globe category and $\omega$-graphs}
The \emph{category of globes} $\f G$ is freely generated by the graph
$$\cd{
0 \ar@<-5pt>[r]_{\tau_{1}} \ar@<5pt>[r]^{\sigma_{1}} & 1 \ar@<-5pt>[r]_{\tau_{2}} \ar@<5pt>[r]^{\sigma_{2}} & \ldots \ar@<-5pt>[r]_{\tau_{n-1}} \ar@<5pt>[r]^{\sigma_{n-1}} & n-1 \ar@<-5pt>[r]_{\tau_{n}} \ar@<5pt>[r]^{\sigma_{n}} & n \ldots}$$
subject to the relations $\sigma_{n} \circ \sigma_{n-1} = \tau_{n} \circ \sigma_{n-1}$ and $\tau_{n} \circ \sigma_{n-1} = \sigma_{n} \circ \sigma_{n-1}$.\par
These relations ensure that $\f G(n,m) = \{\sigma_{n,m},\tau_{n,m}\}$ for $n<m$ where $\sigma_{n,m}$ and $\tau_{n,m}$ are obtained by composing sequences of $\sigma_{i}$'s and $\tau_{i}$'s respectively.  We typically abbreviate $\sigma_{n,m}$ and $\tau_{n,m}$ by $\sigma$ and $\tau$ when the context is clear.\par
%A functor $A:\f G \to \C$ is specified by objects $An$ together with maps as below left
%$$
%\cd{
%A(n) \ar@<0.8ex>[r]^{\tau_{n}} \ar@<-0.8ex>[r]_{\sigma_{n}} & A(n+1) && B(n+1) \ar@<0.8ex>[r]^{s_{n}} \ar@<-0.8ex>[r]_{t_{n}} & B(n)}
%$$
%where we write $A(\tau_{n})=\tau_{n}$ and $A(\sigma_{n})=\sigma_{n}$.  
A functor $A:\f G^{op} \to \C$ is called an $\omega$-\emph{graph} or \emph{globular object} in $\C$ and is specified by objects $A(n)$ together with morphisms
$$\cd{
A(n) \ar@<0.8ex>[r]^-{s_{n}} \ar@<-0.8ex>[r]_-{t_{n}} & A(n-1)}
$$
where we write $s_{n}=A(\tau_{n})$ and $t_{n}=A(\sigma_{n})$.  Similarly we write $s_{n,m}=A(\tau_{n,m})$ and $t_{n,m}=A(\sigma_{n,m})$, or just $s$ and $t$ if the context is clear.
%A functor $B:\f G^{op} \to \C$ is called a \emph{globular object} or $\omega$-\emph{graph} in $\C$ and is specified by maps $B(\tau_{n})=s_{n}$ and $B(\sigma_{n})=s_{n}$ as above right.

\subsection{Globular sums and globular products}
A \emph{table of dimensions} is a sequence $\overline{n}=(n_{1}, \ldots ,n_{k})$ of natural numbers with $n_{2i-1} > n_{2i} < n_{2i+1}$ and $k \in \{1,3,5, \ldots\}$.  Given $\overline{n}$, a functor $D:\f G \to \C$ determines a diagram
$$
\xy
(12,0)*+{D(n_{2})}="10"; (36,0)*+{D(n_{4})}="30";(60,-5)*+{\ldots}; (84,0)*+{D(n_{k-1})}="40";
(0,-10)*+{D(n_{1})}="01"; (24,-10)*+{D(n_{3})}="21";(48,-10)*+{D(n_{5})}="41";(72,-10)*+{D(n_{k-2})}="31";(96,-10)*+{D(n_{k})}="51";
{\ar_-{D\tau} "10"; "01"};{\ar^-{D\sigma} "10"; "21"};{\ar_-{D\tau} "30"; "21"};{\ar^-{D\sigma} "30"; "41"};{\ar_-{D\tau} "40"; "31"};{\ar^-{D\sigma} "40"; "51"};
\endxy
$$
in $\C$ whose colimit
%, the amalgamated sum $Dn_{1} \cup_{D(n_{2})} Dn_{3} \ldots \cup_{D(n_{k-1})} D(n_{k})$, 
is called a \emph{globular sum} and denoted by $D(\overline{n})$.  If all such colimits exist then we say that $\C$ admits $D$-globular sums (or just globular sums).  For $Y:\f G \to [\f G^{op},\Set]$ the globular sums $Y(1)$ and $Y(1,0,2,1,2)$ are depicted below.
$$\xy
(0,0)*+{\bullet}="00"; (20,0)*+{\bullet}="10";{\ar^{} "00"; "10"}
\endxy
\hspace{2cm}
\xy
(0,0)*+{\bullet}="00"; (20,0)*+{\bullet}="10";(40,0)*+{\bullet}="20";
{\ar^{} "00"; "10"};{\ar@/^1.5pc/^{} "10"; "20"};{\ar@/_1.5pc/^{} "10"; "20"}; {\ar^{} "10"; "20"};{\ar@{=>}^{}(30,6)*+{};(30,1)*+{}};{\ar@{=>}^{}(30,-1)*+{};(30,-6)*+{}};
\endxy$$
Though we have not labelled them differently, it is important to note that all of the cells depicted above are distinct.\par
Likewise an $\omega$-graph $A:\f G^{op} \to \C$ determines a diagram 
$$
\xy
(12,0)*+{A(n_{2})}="10"; (36,0)*+{A(n_{4})}="30";(60,5)*+{\ldots}; (84,0)*+{A(n_{k-1})}="40";
(0,10)*+{A(n_{1})}="01"; (24,10)*+{A(n_{3})}="21";(48,10)*+{A(n_{5})}="41";(72,10)*+{A(n_{k-2})}="31";(96,10)*+{A(n_{k})}="51";
{\ar_-{t} "01"; "10"};{\ar^-{s} "21"; "10"};{\ar_-{t} "21"; "30"};{\ar^-{s} "41"; "30"};{\ar_-{t} "31"; "40"};{\ar^-{s} "51"; "40"};
\endxy
$$
whose limit, denoted $A(\overline{n})$, is called a \emph{globular product}.

\subsection{Globular theories}
We now describe the category $\Theta_{0}$ that plays the same role for globular theories as the skeletal category of finite sets plays for Lawvere theories.\par To construct $\Theta_{0}$ observe that the category of globular sets $[\f G^{op},\Set]$ is cocomplete and therefore admits $Y$-globular sums.  Taking the full subcategory of $[\f G^{op},\Set]$ on the globular sums yields the initial, up to equivalence, category with globular sums.  $\Theta_{0}$ is a skeleton of this: we can view its objects as the tables of dimensions whilst $\Theta_{0}(\overline{n},\overline{m}) = [\f G^{op},\Set](Y(\overline{n}), Y(\overline{m}))$.  The functor $$D: \f G \to \Theta_{0}$$ factors the Yoneda embedding and is given by $Dn=(n)$ on objects.  We record the universal property of its dual.
\begin{Lemma}
Let $\C$ be a category admitting $A$-globular products.  There exists an essentially unique extension 
$$
\cd{\Theta_{0}^{op} \ar[dr]^{A(-)} \\
\f G^{op} \ar[u]^{D^{op}} \ar[r]_{A} & \C}
$$
of $A$ to a globular product preserving functor $A(-):\Theta_{0}^{op} \to \C$.   This sends $\overline{n}$ to the globular product $A(\overline{n})$.
\end{Lemma}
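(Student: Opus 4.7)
The statement is essentially the dual of the universal property that implicitly \emph{defines} $\Theta_{0}$ in the preceding paragraph, so my plan is to make that universal property explicit and then dualize.

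First, I would record the covariant version: since $\Theta_{0}$ is (a skeleton of) the full subcategory of $[\f G^{op},\Set]$ on the globular sums, it is the initial category equipped with $Y$-globular sums under $\f G$. Unwinding this means that for any category $\D$ admitting $F$-globular sums, where $F\ti \f G \to \D$, the functor $F$ extends essentially uniquely along $D \ti \f G \to \Theta_{0}$ to a globular sum preserving functor $\bar{F}\ti \Theta_{0} \to \D$, with $\bar{F}(\overline{n})$ being the globular sum of $F$ along the diagram determined by $\overline{n}$. This is an instance of the free cocompletion of $\f G$ under a class of colimits, and the extension may be constructed as the left Kan extension along $D$ restricted to $\Theta_{0}$, whose pointwise formula evaluates precisely to the globular sums.

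Now I would apply this universal property to $\D = \C^{op}$ and $F = A^{op}\ti \f G \to \C^{op}$. Globular sums in $\C^{op}$ are, by definition, globular products in $\C$; so the hypothesis that $\C$ admits $A$-globular products is exactly the assertion that $\C^{op}$ admits $A^{op}$-globular sums. The universal property yields an essentially unique globular-sum preserving extension $\overline{A^{op}}\ti \Theta_{0} \to \C^{op}$ sending $\overline{n}$ to the globular sum of $A^{op}$ at $\overline{n}$. Taking opposites produces $A(-)\ti \Theta_{0}^{op} \to \C$ preserving globular products, factoring $A$ through $D^{op}$ and satisfying $A(-)(\overline{n}) = A(\overline{n})$.

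Essential uniqueness transports across the dualization in the same way: any globular product preserving extension $G\ti \Theta_{0}^{op} \to \C$ of $A$ gives a globular sum preserving extension $G^{op}\ti \Theta_{0} \to \C^{op}$ of $A^{op}$, and hence is isomorphic to $\overline{A^{op}}{}^{op}$ by the covariant universal property. The main conceptual point, and the only real content, is thus the covariant universal property of $\Theta_{0}$; once this is in hand the lemma reduces to bookkeeping about opposite categories. I do not expect a significant obstacle, since the construction of $\Theta_{0}$ as the full subcategory of globular sums inside $[\f G^{op},\Set]$ makes the universal property essentially tautological via left Kan extension.
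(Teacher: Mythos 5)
Your proposal is correct and matches the paper's intent exactly: the paper offers no proof beyond asserting that the lemma ``records the universal property of the dual'' of $\Theta_{0}$ as the initial category with globular sums, and your argument simply makes that universal property explicit (via free cocompletion under globular sums / left Kan extension along $D$) and dualizes. Nothing further is needed.
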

\begin{Definition}\label{thm:GlobularTheory}
A \emph{globular theory} consists of an identity on objects functor $$J:\Theta_{0}^{op} \to \f T$$ that preserves globular products.
\end{Definition}
The category $Mod(\f T,\C)$ of $\f T$-algebras in $\C$ is the full subcategory of $[\f T,\C]$ containing the globular product preserving functors. Observe that there is a forgetful functor
$$U:Mod(\f T,\C) \to [\f G^{op},\C]$$
given by restriction along $J\circ D^{op}:\f G^{op} \to \f T$.  If $U(X)=A$ then we call $X$ a \emph{$\f T$-algebra structure on $A$.}

\begin{Remark}
The category $\Theta_{0}$ was first described by Berger \cite{Berger2002A-cellular} using level trees.  Globular theories were also first described in \emph{ibid.}, in which the definition was formulated using a sheaf condition equivalent to $J$'s preserving globular products.  The only difference with Definition~\ref{thm:GlobularTheory} is that Definition 1.5 of \emph{ibid.} required that $J$ be faithful, as it typically is.
\end{Remark}

\subsection{Contractibility and weak $\omega$-groupoids}\label{section:Contractibility}
Let $A:\f G^{op} \to \C$.  By a \emph{parallel pair of $n$-cells in $A$} is meant a pair $$f,g:X \rightrightarrows A(n)$$ such that either $n=0$ or $s_{n} \circ f = s_{n} \circ g$ and $t_{n} \circ f = t_{n} \circ g$.  A \emph{lifting} for such a pair is an arrow $h:X \to A(n+1)$ such that 
$$
\cd{& A(n+1) \ar@<-3pt>[d]_-{s} \ar@<3pt>[d]^-{t} \\
X \ar[ur]^-{h} \ar@<3pt>[r]^-{f} \ar@<-3pt>[r]_-{g} & A(n)}
$$
commutes..\par
The $\omega$-graph $A$ is said to be \emph{contractible} if each parallel pair of $n$-cells in $A$ has a lifting, whilst a globular theory $J:\Theta_{0}^{op} \to \f T$ is said to be contractible if its underlying $\omega$-graph $$J \circ D^{op}:\f G^{op} \to \f T$$ is contractible.
\begin{Definition}
A \emph{Grothendieck weak $\omega$-groupoid} is an algebra for some contractible globular theory.
\end{Definition}
Let $J:\Theta_{0}^{op} \to \f T$ be a contractible globular theory and let us agree not to write the action of $J$.  Where are the operations for a weak $\omega$-groupoid in $\f T$?  The map representing composition of 1-cells should have domain the pullback below left.
$$
\cd{(1,0,1) \ar[r]^-{q} \ar[d]_{p} & (1) \ar[d]^-{s} && (1) \ar@<3pt>[d]^-{t} \ar@<-3pt>[d]_-{s} &&& (2) \ar@<3pt>[d]^-{t} \ar@<-3pt>[d]_-{s} \\
(1 )\ar[r]^-{t} & (0) & (1,0,1) \ar@<3pt>[r]^-{s \circ p} \ar@<-3pt>[r]_-{t \circ q}  \ar[ur]^-{m} & (0) & (1,0,1,0,1) \ar@<3pt>[rr]^-{m \circ (m,1)} \ar@<-3pt>[rr]_-{m \circ (1,m)}  \ar[urr]^-{a} && (1) }
$$
Now the parallel $0$-cells in the second diagram admit, by contractibility of $\f T$, a lifting $m$ and this encodes the sought for composition.  Associativity of composition up to a 2-cell is encoded by the lifting $a$ for the parallel $1$-cells in the third diagram.  Weak inverses are encoded by the lifting for the parallel pair 
$$\cd{(1) \ar@<3pt>[r]^-{t} \ar@<-3pt>[r]_-{s} & (0) \hspace{0.5cm} .}
$$
And so on.  For further details see Section 1.7 of \cite{Maltsiniotis2010Grothendieck} or Section 3 of  \cite{Ara2013On-the}.

\begin{Remark}
In \cite{Maltsiniotis2010Grothendieck} a weak $\omega$-groupoid is defined to be an algebra for a \emph{$Gr$-coherator} -- a certain kind of contractible globular theory.  
By Theorem 3.14 of \emph{loc.cit.} the $Gr$-coherators are precisely the \emph{cellular} contractible globular theories and therefore are weakly initial amongst contractible globular theories.  It follows that an $\omega$-graph admits weak $\omega$-groupoid in the present sense just when it admits an algebra structure for a $Gr$-coherator.
\end{Remark}
%We will use the following lemma to construct examples of $\omega$-groupoids later.  The proof is a matter of tracing through the definitions.
%\begin{Lemma}
%Let $A:\f G^{op} \to \C$ be a $\omega$-graph in a category with globular products.  The endomorphism theory $\textnormal{End}(A)$ is contractible if and only if each parallel pair $f,g:A(\overline{n}) \rightrightarrows A(n)$ of $n$-cells in $A$ with domain a globular product has a lifting.
%\end{Lemma}

\section{Identity type categories and iterated path objects}
\subsection{Identity type categories}
An \emph{identity type category} \cite{Berg2011Types} is a category $\C$ equipped with a weak factorisation system $(L,R)$\begin{footnote}{The definition of \cite{Berg2011Types} actually only requires certain factorisations to exist but is equivalent to the present formulation by the argument of Lemma 2.4 of \cite{Shulman2013Univalence}.  See also Lemma 11 of \cite{Gambino2008The-identity} for the original type theoretic argument.}\end{footnote} satisfying the following properties:
\begin{itemize}
\item A terminal object $1$ exists and for each $X \in \C$ the unique map $!:X \to 1$ is an $R$-map.
\item Pullbacks of $R$-maps exist and the pullback of an $L$-map along an $R$-map is again an $L$-map.
\end{itemize}
As shown in \cite{Gambino2008The-identity, Berg2011Types} the syntactic category of an intensional type theory admits the structure of an identity type category.\par
Further examples arise from Quillen model categories $\C$ whose cofibrations are pullback stable along fibrations.  Since weak equivalences between fibrant objects are always stable under pullback along fibrations (see Proposition 13.1.2 of \cite{Hirschhorn2003Model-categories}) the trivial cofibrations between fibrant objects in such model categories are also stable under pullback along fibrations.  So for such $\C$ it follows that the full subcategory of fibrant objects $\C_{f}$ is an identity type category when equipped with the restricted (trivial cofibration/fibration)-weak factorisation system.\par
In the Str\o m model structure on topological spaces \cite{Strom1982The} the cofibrations --\emph{closed cofibrations}-- are stable under pullback along the fibrations, the \emph{Hurewicz fibrations}. This is Theorem 12 of \cite{Strom1969Notes}.  Since all topological spaces are fibrant the category of topological spaces is therefore an identity type category.  In the standard model structure on simplicial sets \cite{Quillen1967Homotopical} the cofibrations are the monos and so are pullback stable along all maps; it follows that the full subcategory of fibrant objects -- \emph{the Kan complexes} -- is an identity type category.
\subsection{Iterating the path object construction}
Starting with an object $X$ of $\C$ the goal now is to build an $\omega$-graph $X_{\star}$ with $X_{\star}(0)=X$. $X_{\star}(1)$ is to be the \emph{path object} of $X$: that is, an $(L,R)$-factorisation
$$
\cd{ X_{\star}(0) \ar[rr]^-{i_{0,1}} && X_{\star}(1) \ar[rr]^-{\langle s_{1},t_{1} \rangle} && X_{\star}(0) \times X_{\star}(0)}
$$
of the diagonal map.  Then $s_{1},t_{1}:X_{\star}(1) \rightrightarrows X_{\star}(0)$ will be the underlying \emph{$1$-graph} of $X_{\star}$.\par
The inductive construction of an $(n+1)$-graph from an $n$-graph makes use of the \emph{$(n+1)$-boundary} $B_{n+1}X_{\star}$ of an $n$-graph.  This has $B_{1}X_{\star} = X_{\star}(0) \times X_{\star}(0)$ whilst for higher $n$, it is given by the pullback below
\begin{equation}\label{eq:induction}
\cd{
B_{n+1}X_{\star} \ar[d]_{q_{n}} \ar[r]^{p_{n}} & X_{\star}(n) \ar[d]^{\langle s_{n}, t_{n}\rangle}\\
X_{\star}(n) \ar[r]_{\langle s_{n}, t_{n}\rangle} & B_{n}X_{\star}}
\end{equation}
in which the map 
\begin{equation}\label{eq:induction2}
\langle s_{n}, t_{n}\rangle: X_{\star}(n) \to B_{n}X_{\star}
\end{equation} is inductively constructed.\par
Let us note that by restriction one can speak of the $(n+1)$-boundary of an $\omega$-graph, and it is not hard to see that this represents parallel pairs of $n$-cells in the $\omega$-graph, as were defined in Section~\ref{section:Contractibility}.\par
Now the pullback \eqref{eq:induction} exists in an identity type category because the inductively defined map \eqref{eq:induction2} is an $R$-map at each stage.  For the inductive step, we observe that the identity on $X_{\star}(n)$ induces a diagonal $\langle 1,1 \rangle:X_{\star}(n) \to B_{n+1}X_{\star}$ whose $(L,R)$-factorisation
$$
\cd{ X_{\star}(n) \ar[rr]^{i_{n,n+1}} && X_{\star}(n+1) \ar[rr]^{\langle s_{n+1},t_{n+1} \rangle} && B_{n+1}X_{\star}}
$$
is taken to define $X_{\star}(n+1)$.  The two maps $s_{n+1},t_{n+1}:X_{\star}(n+1) \rightrightarrows X_{\star}(n)$ then extend $X_{\star}$ to an $(n+1)$-graph.  Because the projections in ~\eqref{eq:induction} are pullbacks of $R$-maps, they are $R$-maps too.  And since $s_{n+1}$ and $t_{n+1}$ are obtained by composing these projections with the $R$-map $\langle s_{n},t_{n} \rangle$ it follows that both $s_{n+1}$ and $t_{n+1}$ are $R$-maps as well.\par
Induction now produces an $\omega$-graph $X_{\star}$ that we call the \emph{iterated path object} and whose relevant properties we now record.
\begin{Lemma}
The iterated path object $X_{\star}$ is a reflexive globular context \cite{Berg2011Types} , i.e.,
\begin{enumerate}
\item There exist $L$-maps $i_{n,n+1}:X_{\star}(n) \to X_{\star}(n+1)$ with $s_{n} \circ i_{n,n+1} = t_{n} \circ i_{n,n+1}$.
\item The maps $s_{n},t_{n}:X_{\star}(n+1) \rightrightarrows X_{\star}(n)$ and $\langle s_{n},t_{n} \rangle:X_{\star}(n+1) \to  B_{n}X_{\star}$ are $R$-maps.
\end{enumerate}
\end{Lemma}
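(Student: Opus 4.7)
The plan is simply to read off both clauses from the inductive construction of $X_\star$ just described above the lemma; nothing extra is needed beyond basic properties of weak factorisation systems. I will handle the two clauses in opposite order.

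For clause (2), I would proceed by induction on $n$ to show that $\langle s_n,t_n\rangle$ is an $R$-map. The base case $n=1$: the map $\langle s_1,t_1\rangle\colon X_\star(1)\to X\times X$ is the $R$-map in the chosen factorisation of the diagonal, so it is an $R$-map by definition. For the inductive step, assume $\langle s_n,t_n\rangle$ is an $R$-map; then the pullback~\eqref{eq:induction} exists, and the two projections $p_n,q_n\colon B_{n+1}X_\star\rightrightarrows X_\star(n)$ are pullbacks of $\langle s_n,t_n\rangle$. Since $R$ is defined by a right lifting property, its members are stable under pullback whenever the pullback exists, so $p_n$ and $q_n$ are $R$-maps. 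The map $\langle s_{n+1},t_{n+1}\rangle$ is then an $R$-map as the right factor of the chosen factorisation of $\langle 1,1\rangle$. Finally $s_{n+1}=p_n\circ \langle s_{n+1},t_{n+1}\rangle$ and $t_{n+1}=q_n\circ \langle s_{n+1},t_{n+1}\rangle$ are composites of $R$-maps; this closes the induction. (For the very base case, $s_1$ and $t_1$ are composites of $\langle s_1,t_1\rangle$ with the product projections $X\times X\to X$, each of which is itself a pullback of the $R$-map $X\to 1$ along the other projection, hence an $R$-map.)

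For clause (1), the map $i_{n,n+1}$ is defined as the left factor of the chosen $(L,R)$-factorisation of the reflexivity map, so it is an $L$-map by construction. To verify the equation $s_{n+1}\circ i_{n,n+1}=t_{n+1}\circ i_{n,n+1}$, I would use that the factorisation satisfies $\langle s_{n+1},t_{n+1}\rangle\circ i_{n,n+1}=\langle 1,1\rangle$ (with the target being $B_{n+1}X_\star$ for $n\geq 1$ and $X\times X$ for $n=0$); post-composing with the two pullback projections $p_n,q_n$ (respectively the product projections for $n=0$) yields $s_{n+1}\circ i_{n,n+1}=1_{X_\star(n)}=t_{n+1}\circ i_{n,n+1}$ as required.

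I do not expect a genuine obstacle: the whole argument is bookkeeping, its only substantive ingredients being the stability of $R$-maps under pullback (immediate from the lifting-property definition) and the fact that the pullbacks in~\eqref{eq:induction} exist (guaranteed inductively by clause (2) together with the axiom that pullbacks of $R$-maps exist in an identity type category). The mildly delicate point is simply to arrange the induction so that the $R$-property of $\langle s_n,t_n\rangle$ is available before one asserts the existence of $B_{n+1}X_\star$.
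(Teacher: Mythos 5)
Your proof is correct and follows essentially the same route as the paper, which embeds exactly this argument in the construction preceding the lemma: the $i_{n,n+1}$ are $L$-maps as left factors of the diagonal factorisations, the projections $p_n,q_n$ are $R$-maps as pullbacks of the inductively-established $R$-map $\langle s_n,t_n\rangle$, and $s_{n+1},t_{n+1}$ are $R$-maps as composites. Your explicit treatment of the base case (the product projections $X\times X\to X$ being pullbacks of the $R$-map $X\to 1$) is a detail the paper leaves implicit but is entirely in its spirit.
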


\begin{Remark}
A couple of points are perhaps worth noting.  Firstly, the maps $i_{n,n+1}$ exhibit $X_{\star}$ as a \emph{reflexive} (globular object / $\omega$-graph).  Secondly, the above construction of $X_{\star}$ from $X$ can be understood in terms of the \emph{Reedy structure} on the \emph{reflexive globe category} $\f R$.  For $\f J$ a Reedy category (see \cite{Hovey1999Model} for instance) let $\f J_{\leq n}$ denote the full subcategory on the objects of degree at most $n$.  Then extensions of $A:\f J_{\leq n} \to \C$ to $\f J_{\leq n+1}$ correspond to factorisations of the map
$L_{n}A \to M_{n}A$ from the $n$-th \emph{latching object} of $A$ to the $n$-th \emph{matching object} of $A$, a colimit and limit respectively.   It follows that for $\C$ a sufficiently bicomplete category
equipped with a weak factorisation system, there is a canonical method of inductively constructing an object $X_{\star}:\f J \to \f C$ from $X \in \C$.   Specialised to the Reedy category $\f R$ and an identity type category $(\C,L,R)$ this yields the iterated path object construction.
\end{Remark}

\section{Endomorphism theories}
Let $\C$ be a category with $A$-globular products and consider the extension $A(-):\Theta_{0}^{op} \to \C$ of $A$ as below.
$$\cd{
\Theta_{0}^{op} \ar[drr]|{A(-)} \ar[rr]^{J_{A}} && \textnormal{End}(A) \ar[d]^{K_{A}} \\
\f G^{op} \ar[u]^{D^{op}} \ar[rr]_{A} && \C
}$$
Factoring $A(-)$ as identity on objects followed by fully faithful yields the \emph{endomorphism theory} $$J_{A}:\Theta_{0}^{op} \to \textnormal{End}(A)$$ of $A$.  This has the same objects as $\Theta_{0}$ whilst $\textnormal{End}(A)(\overline{n},\overline{m})=\C(A(\overline{n}),A(\overline{m}))$.  Since $A(-)$ preserves globular products so do both $J_{A}:\Theta_{0}^{op} \to \textnormal{End}(A)$ and $K_{A}:\textnormal{End}(A) \to \C$.  The first fact establishes that \emph{$\textnormal{End}(A)$ is a globular theory} whilst the second exhibits the \emph{canonical $\textnormal{End}(A)$-algebra structure on $A$}.\par
We will use the following lemma, whose proof is a matter of tracing through the definitions, to construct weak $\omega$-groupoids.
\begin{Lemma}\label{thm:endomorphism}
Let $\C$ admit $A$-globular products. Then $\textnormal{End}(A)$ is contractible if and only if each parallel pair $f,g:A(\overline{n}) \rightrightarrows A(m)$ of $m$-cells in $A$ with domain a globular product has a lifting.
\end{Lemma}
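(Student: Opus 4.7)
The plan is to verify both implications simultaneously by directly unfolding the definitions. I first recall that by construction $\textnormal{End}(A)$ has the tables of dimensions as objects, with $\textnormal{End}(A)(\overline{k}, \overline{n}) = \C(A(\overline{k}), A(\overline{n}))$, and that $J_A$ is identity on objects. Hence the underlying $\omega$-graph $J_A \circ D^{op}: \f G^{op} \to \textnormal{End}(A)$ sends each $n$ to the singleton table $(n)$, and its source and target maps $s_m, t_m: (m) \rightrightarrows (m-1)$ become, under the identification $\textnormal{End}(A)((m),(m-1)) = \C(A(m), A(m-1))$, exactly $A(\tau_m)$ and $A(\sigma_m)$.

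Next I will translate parallel pairs and liftings. A parallel pair of $m$-cells in this $\omega$-graph is a pair $f, g: \overline{n} \rightrightarrows (m)$ in $\textnormal{End}(A)$, which is the same data as a pair of $\C$-morphisms $f, g: A(\overline{n}) \rightrightarrows A(m)$; the boundary conditions $s_m f = s_m g$ and $t_m f = t_m g$ for $m > 0$ translate verbatim to the requirement that $f, g$ be a parallel pair of $m$-cells in the $\omega$-graph $A$. Similarly, a lifting $h: \overline{n} \to (m+1)$ in $\textnormal{End}(A)$ for $(f,g)$ is nothing other than a $\C$-morphism $h: A(\overline{n}) \to A(m+1)$ making the two required triangles commute, which is precisely a lifting for $(f, g)$ in $A$ in the sense of Section~\ref{section:Contractibility}.

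Putting these identifications together, contractibility of $\textnormal{End}(A)$---by definition, the statement that every parallel pair of $m$-cells in its underlying $\omega$-graph admits a lifting, for all $m \geq 0$---becomes precisely the condition that every parallel pair $f, g: A(\overline{n}) \rightrightarrows A(m)$ in $A$ whose domain $A(\overline{n})$ is a globular product admits a $\C$-lifting. Since this is exactly the condition in the lemma, both implications follow at once. No substantive obstacle is anticipated; the task is one of careful bookkeeping, the only subtle point being to keep track of the variance between $\Theta_0$ and $\Theta_0^{op}$ so that the directions of the source and target maps in $\textnormal{End}(A)$ are identified correctly.
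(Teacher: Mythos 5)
Your proposal is correct and is exactly the "tracing through the definitions" that the paper itself invokes (the paper states the lemma's proof is a matter of unfolding definitions and omits the details). You correctly identify the one point that makes the equivalence exact --- that the objects of $\textnormal{End}(A)$ are precisely the tables of dimensions, so parallel pairs in its underlying $\omega$-graph range over exactly the domains $A(\overline{n})$ that are globular products --- and the variance bookkeeping is handled properly.
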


\section{The weak $\omega$-groupoid structure}

\begin{Theorem}
Let $\C$ be an identity type category.  The for each $X \in \C$ the iterated path object $X_{\star}$ admits the structure of a weak $\omega$-groupoid.
\end{Theorem}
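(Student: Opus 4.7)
My plan is to apply Lemma \ref{thm:endomorphism} to $A = X_{\star}$: if the endomorphism globular theory $\textnormal{End}(X_{\star})$ is contractible, then the canonical $\textnormal{End}(X_{\star})$-algebra structure on $X_{\star}$ exhibits it as a Grothendieck weak $\omega$-groupoid.

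First, one checks that $\C$ admits $X_{\star}$-globular products. Each product $X_{\star}(\overline{n})$ is an iterated pullback of the maps $s_{n}, t_{n}: X_{\star}(n) \to X_{\star}(n-1)$, all of which are $R$-maps by construction of $X_{\star}$. Since pullbacks of $R$-maps exist in any identity type category, these globular products exist, and the induced projections $X_{\star}(\overline{n}) \to X_{\star}(n_{i})$ are again $R$-maps.

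By Lemma \ref{thm:endomorphism}, contractibility reduces to the following task: for every parallel pair $f, g: X_{\star}(\overline{n}) \rightrightarrows X_{\star}(m)$, produce a lift $h: X_{\star}(\overline{n}) \to X_{\star}(m+1)$; equivalently, factor the induced map $\langle f, g\rangle: X_{\star}(\overline{n}) \to B_{m+1}X_{\star}$ through the $R$-map $\langle s_{m+1}, t_{m+1}\rangle$. The lifting square to which this amounts carries no canonical map on its left edge, so the factorisation cannot be produced by a single WFS-lift against $\langle s_{m+1}, t_{m+1}\rangle$. Constructing $h$ is the main obstacle.

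The intended strategy, essentially that of van den Berg and Garner, is to build $h$ by an induction exploiting the pullback decomposition of $X_{\star}(\overline{n})$ together with the $(L,R)$-factorisations defining each $X_{\star}(k+1)$. The base case is the tautological lift $i_{m,m+1}: X_{\star}(m) \to X_{\star}(m+1)$ of the pair $(\id, \id)$; the inductive step reduces to a cascade of smaller WFS lifting problems built from the reflexivities $i_{k,k+1}$ and their pullbacks along $R$-maps (legitimate by pullback-stability of $L$-maps along $R$-maps in an identity type category). Once this combinatorial induction is in place, Lemma \ref{thm:endomorphism} immediately yields contractibility of $\textnormal{End}(X_{\star})$, and the theorem follows. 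The simplification relative to \cite{Berg2011Types} is that we do not need to repackage these lifts into a Batanin globular operad: Lemma \ref{thm:endomorphism} accepts them directly.
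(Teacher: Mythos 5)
There is a genuine gap, and it sits exactly where you flagged the ``main obstacle'': the statement you set out to prove --- that the endomorphism theory $\textnormal{End}(X_{\star})$, computed in $\C$ itself, is contractible --- is false in general, so no cascade of auxiliary lifting problems can rescue it. By Lemma~\ref{thm:endomorphism}, contractibility of $\textnormal{End}(X_{\star})$ would require a lifting for \emph{every} parallel pair $f,g:X_{\star}(\overline{n})\rightrightarrows X_{\star}(m)$ with domain a globular product, and for $m=0$ a parallel pair is just an arbitrary pair of maps into $X_{\star}(0)=X$. Taking $\C$ to be topological spaces and $\overline{n}=(0)$, this would demand a homotopy between any two self-maps of $X$ --- for instance between the identity and a constant map on the circle. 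So the unrestricted endomorphism theory in $\C$ is the wrong object to which to apply Lemma~\ref{thm:endomorphism}.

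The resolution in the paper, which your sketch is missing, is to change the ambient category \emph{before} forming the endomorphism theory. The $L$-maps $i_{n}:X_{\star}(0)\to X_{\star}(n)$ (composites of the reflexivities) assemble into a cone exhibiting a lift $i/X_{\star}:\f G^{op}\to X_{\star}(0)/\C$ of $X_{\star}$ to the coslice category, and one takes $\textnormal{End}(i/X_{\star})$ there; composing its canonical algebra with the (globular-product-preserving) forgetful functor back to $\C$ then puts the structure on $X_{\star}$. A parallel pair in the coslice is a pair $f,g$ that additionally commutes with the basepoint inclusions, and this supplies precisely the canonical left edge whose absence you correctly identified: one proves by induction on the length of $\overline{n}$ (using the pullback decomposition of $X_{\star}(\overline{n})$ and pullback-stability of $L$-maps along $R$-maps) that each $i_{\overline{n}}:X_{\star}(0)\to X_{\star}(\overline{n})$ is an $L$-map, whence $\langle f,g\rangle$ lifts against the $R$-map $\langle s,t\rangle:X_{\star}(m+1)\to B_{m+1}X_{\star}$ in a single WFS lifting square with left edge $i_{\overline{n}}$. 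Your observations that the globular products exist and that the projections are $R$-maps are correct and do appear in the paper's argument, but the inductive ``cascade of lifts'' is not what closes the proof; the essential idea is the passage to the coslice.
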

\begin{proof}
More generally we will show that each reflexive globular context $A:\f G^{op} \to \C$ admits the structure of a weak $\omega$-groupoid.  Firstly we establish some notation.  On composing the $L$-maps $i_{n,n+1}: A(n) \to A(n+1)$ we obtain further $L$-maps $i_{n,m}:A(n) \to A(m)$ for $n < m$ which will be abbreviated by $i$, excepting the case $n=0$ where we write $i_{n}:A(0) \to A(n)$.\par
Now the $L$-maps $i_{n}:A(0) \to A(n)$ assemble into a cone $i:\Delta A(0) \to A \in [\f G^{op},\C]$ under $A(0)$.  This induces a factorisation
$$\cd{
\f G^{op} \ar[dr]_-{A} \ar[r]^-{i/A} & A(0)/ \C \ar[d]^-{U} \\
& \C}
$$
of $A$ through  $A(0)/ \C$.  Here the functor $i/A$ sends $n$ to $i_{n}:A(0) \to A(n)$ whilst $U$ is the forgetful functor.\par
We will prove the theorem by showing:
\begin{enumerate}
\item  The category $A(0)/\C$ has $i/A$-globular products preserved by $U$;
\item The endomorphism theory $\textnormal{End}(i/A)$ is contractible.  
\end{enumerate}
Then the composite 
$$\cd{\textnormal{End}(i/A) \ar[rr]^-{K_{i/A}} && A(0)/\C \ar[rr]^-{U} && \C}$$
will exhibit the structure of a $\textnormal{End}(i/A)$-algebra -- and hence weak $\omega$-groupoid -- on $A$.\par
For (1) we proceed by induction over the length $k$ of a table of dimensions $\overline{n}=(n_{1},\ldots, n_{k})$.  As usual we write $A(\overline{n})$ for the globular product in $\C$ with $p^{\overline{n}}_{j}:A(\overline{n}) \to A(n_{j})$ the $j$'th projection.  We write $i_{\overline{n}}:A(0) \to A(\overline{n})$ for the globular product in $A(0)/ \C$ which then satisfies
\begin{equation*}
\cd{A(0) \ar[r]^{i_{\overline{n}}} & A(\overline{n}) \ar[r]^{p^{\overline{n}}_{j}} & A(n_{j}) & = & A(0) \ar[r]^{i_{n_{j}}} & A(n_{j}) & .}
\end{equation*}
For the base case $\overline{n}=(n_{1})$ we have $A(\overline{n})=A(n_{1})$ with the identity projection, and $i_{n_{1}}:A(0) \to A(n_{1})$ as globular product in $A(0)/\C$.  For $\overline{n}^{+}=(n_{1}, \ldots n_{k},n_{k+1},n_{k+2})$ the globular product $A(\overline{n}^{+})$ in $\C$ can be constructed as the pullback in the rectangle below
\begin{equation}\label{eq:GlobProduct}
\cd{
A(0) \ar@/_1pc/[ddr]_-{i_{\overline{n}}} \ar@/^1pc/[drrr]^-{i_{n_{k+2}}} \ar[dr]^-{i_{\overline{n}^{+}}} \\
& A(\overline{n}^{+}) \ar[d]_-{q} \ar[rr]^-{p^{\overline{n}^{+}}_{k+2}} && A(n_{k+2}) \ar[d]^-{s} \\
& A(\overline{n}) \ar[r]_-{p^{\overline{n}}_{k}} & A(n_{k}) \ar[r]_-{t} & A(n_{k+1})}
\end{equation}
which exists since $s:A(n_{k+2}) \to A(n_{k+1})$ is an $R$-map.  By the universal property of the pullback there exists a unique map $i_{\overline{n}^{+}}:A(0) \to A(\overline{n}^{+})$ rendering commutative the two triangles.  Since $U$ creates pullbacks this is the pullback, and hence globular product, in $A(0)/ \C$.\par
By induction we have now proven (1).  A further consequence of the inductive construction is that the final projection
$$p^{\overline{n}}_{k}:A(\overline{n}) \to A(n_{k})$$ is an $R$-map.  This is trivial in the base case, and clear in the inductive step since the final projection $p^{\overline{n}^{+}}_{k+2}$ is the pullback of a composite $t \circ p^{\overline{n}}_{k}$ of $R$-maps.\par
Now the main ingredient in proving (2) is, in fact, to show that each morphism $$i_{\overline{n}}:A(0) \to A(\overline{n})$$ is an $L$-map and again this is done by induction.  In the base case we have the $L$-map $i_{n_{1}}:A(0) \to A(n_{1})$.  For the inductive step we start by observing that the right vertical arrow $s:A(n_{k+2}) \to A(n_{k+1})$ of \eqref{eq:GlobProduct} has section the $L$-map $i:A(n_{k+1}) \to A(n_{k+2})$.  It follows that its pullback $q$ has a unique section $i^{\prime}$ satisfying the commutativity in the left square below.
$$\cd{
A(\overline{n}) \ar@/^1.5pc/[rr]^{1} \ar[d]_{t \circ p^{\overline{n}}_{k}} \ar[r]^{i^{\prime}} & A(\overline{n}^{+}) \ar[d]_{p^{\overline{n}^{+}}_{k+2}} \ar[r]^{q} & A(\overline{n}) \ar[d]^{t \circ p^{\overline{n}}_{k}} \\
A(n_{k+1}) \ar@/_1.5pc/[rr]_{1} \ar[r]_{i} & A(n_{k+2}) \ar[r]_{s} & A(n_{k+1})
}$$
Since the right and outer rectangles above are pullbacks the left one is a pullback too and, since $p^{\overline{n}^{+}}_{k+2} \in R$ and $i \in L$, it follows that $i^{\prime} \in L$.  Therefore to prove that $i_{\overline{n}^{+}} \in L$ it suffices to show that $i_{\overline{n}^{+}}= i^{\prime} \circ i_{\overline{n}}$.  Both maps give $i_{\overline{n}}$ when postcomposed by $q$.  Composing with the other pullback projection gives
$$p^{\overline{n}^{+}}_{k+2} \circ i^{\prime} \circ i_{\overline{n}} = i \circ s \circ p^{\overline{n}}_{k} \circ i_{\overline{n}} = i \circ t \circ i_{n_{k}} = i \circ i_{n_{k+1}} = i_{n_{k+2}} = p^{\overline{n}^{+}}_{k+2} \circ i_{\overline{n}^{+}}$$
as required.\par
To complete the proof we must show that the endomorphism theory $\textnormal{End}(i/A)$ is contractible.  By Lemma~\ref{thm:endomorphism} this is equally to show that each parallel pair of $m$-cells in $i/A:\f G^{op} \to A(0)/\C$ with domain a globular product $A(\overline{n})$ has a lifting.  Such a parallel pair are depicted below left.
$$\cd{
A(0) \ar[d]_-{i_{\overline{n}}} \ar[dr]^-{i_{m}} && A(0) \ar[d]_-{i_{\overline{n}}} \ar[r]^-{i_{m+1}} & A(m+1) \ar[d]^-{\langle s, t \rangle} \ar@/^1.5pc/@<1ex>[ddr]^-{s} \ar@/^1.5pc/@<-0.5ex>[ddr]_-{t} \\
A(\overline{n}) \ar@<3pt>[r]^-{f} \ar@<-3pt>[r]_-{g} & A(m) & A(\overline{n})\ar@/_1.7pc/@<1ex>[drr]^-{f} \ar@/_1.7pc/@<-0.5ex>[drr]_-{g} \ar[r]_{\langle f, g \rangle} & B_{m+1}A \ar@<3pt>[dr]^-{p_{m}} \ar@<-3pt>[dr]_-{q_{m}} \\
&&&& A(m)}
$$
These induce a unique map $\langle f, g \rangle:A(\overline{n}) \to B_{m+1}A$ to the boundary such that $p_{n} \circ \langle f, g \rangle = f$ and $q_{n} \circ \langle f, g \rangle = g$.  In the diagram above right all paths from $A(0)$ to $A(m)$ coincide as $i_{m}:A(0) \to A(m)$.  Since the pullback projections $p_{m}$ and $q_{m}$ are jointly monic it follows that the square commutes.  Now $i_{\overline{n}}$ is an $L$-map and $\langle s, t \rangle$ an $R$-map.  Therefore there exists a diagonal filler $h:A(\overline{n}) \to A(m+1)$ in the square and this gives the desired lifting.
\end{proof}

\begin{Remark}
The preceding construction of a Grothendieck weak $\omega$-groupoid is simpler than that of a Batanin weak $\omega$-groupoid for a couple of reasons.  One is that Batanin's weak $\omega$-groupoids are defined as a special case of his weak $\omega$-categories and so another step is required.  Another reason is that endomorphism theories seem easier to handle with globular theories rather than globular operads.
\end{Remark}


\begin{thebibliography}{10}

\bibitem{Ara2013On-the}
{\sc Ara, Dimitri.}
\newblock {On the homotopy theory of Grothendieck $\omega$-groupoids.}
\newblock {\em Journal of Pure and Applied Algebra 217} (2013), 1237--1278.

%\bibitem{Altenkirch2014Some}
%{\sc Altenkirch, Thorsten and Nuo, Li. and Rypacek, Ondrej.}
%\newblock {Some constructions on {$\omega$}-groupoids.}
%\newblock {FMTP 2014--Proceedings of the Ninth International Workshop on Logical Frameworks
%and Meta-Languages: Theory and Practice.} Article No. 4, 8 pp., ACM, New York, 2014.

\bibitem{Batanin1998Monoidal}
{\sc Batanin, Michael.}
\newblock{Monoidal globular categories as a natural environment for the theory of weak $n$-categories.}
\newblock {\em Advances in Mathematics 136(1)\/} ,39--103, 1998.

\bibitem{Berg2011Types}
{\sc van den Berg, Benno and Garner, Richard.}
\newblock {Types are weak {$\omega$}-groupoids.}
\newblock {\em Proceedings of the London Mathematical Society 2\/} (2011), 370--394.

\bibitem{Berger2002A-cellular}
{\sc Berger, Clemens.}
\newblock{A Cellular Nerve for Higher Categories.}
\newblock {\em Advances in Mathematics 169\/}, 118--175, 2002.


\bibitem{Brunerie2013Syntactic}
{\sc Brunerie, Guillaume.}
\newblock{Syntactic Grothendieck weak $\omega$-groupoids.}
\newblock {\url{https://uf-ias-2012.wikispaces.com/file/view/SyntacticInfinityGroupoidsRawDefinition.pdf}.} (2013).

\bibitem{Hirschhorn2003Model-categories}
{\sc Hirschhorn, Philip.}
\newblock {Model categories and their localizations.}
\newblock {\em Mathematical Surveys and Monographs 99.} American Mathematical Society, Providence, RI, 2003. xvi+457 pp.

\bibitem{Hovey1999Model}
{\sc Hovey, M.}
\newblock {\em Model categories}, vol.~63 of {\em Mathematical Surveys and
  Monographs}.
\newblock American Mathematical Society, 1999.

\bibitem{Gambino2008The-identity}
{\sc Gambino, Nicola and Garner, Richard.}
\newblock{The identity type weak factorisation system.}
\newblock {\em Theoretical Computer Science 409} (2008), 94--109.

\bibitem{Grothendieck1983Pursuing}
{\sc Grothendieck, Alexander.}
\newblock {Pursuing Stacks (1983).}

\bibitem{Leinster2002A-survey}
{\sc Leinster, Tom.}
\newblock A survey of definitions of $n$-category.
\newblock {\em Theory and Applications of Categories 10\/}, 1--70, 2002.

\bibitem{Lumsdaine2010Weak}
{\sc Lumsdaine, Peter LeFanu.}
\newblock {Weak $\omega$-categories from intensional type theory.}
\newblock {\em Logical Methods in Computer Science 2, issue 23} (2010).

\bibitem{Maltsiniotis2010Grothendieck}
{\sc Maltsiniotis, Georges.}
\newblock{Grothendieck {$\omega$}-groupoids, and still another definition of {$\omega$}-categories.}
\newblock {Arxiv Preprint 2010, \url{arXiv:1009.2331}.}

\bibitem{Quillen1967Homotopical}
{\sc Quillen, Daniel.}
\newblock {\em Homotopical Algebra}, vol.~43 of {\em
  Lecture Notes in Mathematics}.
\newblock Springer, 1967.

\bibitem{Shulman2013Univalence}
{\sc Shulman, Michael.}
\newblock Univalence for inverse diagrams and homotopy canonicity.
\newblock {Arxiv Preprint 2013, \url{arXiv:1203.3253}.}

\bibitem{Strom1969Notes}
{\sc Str{\o}m, Arne.}
\newblock Notes on cofibrations. II.
\newblock {\em Mathematica Scandinavica 22}, 130--142 (1969).

\bibitem{Strom1982The}
{\sc Str{\o}m, Arne.}
\newblock The homotopy category is a homotopy category.
\newblock {\em Archiv der Mathematik (Basel)  23} (1972), 435--441.

\end{thebibliography}
\end{document}